\theoremstyle{plain}
\newtheorem{theorem}{Theorem}[section]
\newtheorem{lemma}[theorem]{Lemma}
\newtheorem{corollary}[theorem]{Corollary}
\newtheorem{proposition}[theorem]{Proposition}
\theoremstyle{definition}
\newtheorem{definition}[theorem]{Definition}
\numberwithin{equation}{section}
\def\cc{{\mathcal C}}
\def\bc{{\mathbb C}}
\def\bn{{\mathbb N}}
\def\br{{\mathbb R}}
\def\a{\alpha}
\def\b{\beta}
\def\d{\delta}
\def\e{\varepsilon}
\def\l{\lambda}
\def\r{\rho}
\begin{document}
\title[Factorizing a Schur product]{Decompositions of block Schur products}
\author[Erik Christensen]{Erik Christensen}
\address{Erik Christensen, Institute for Mathematics, University of Copenhagen, Copenhagen, DK-2100, Denmark}
\email{echris@math.ku.dk}
\begin{abstract}
Given two $m \times n $  matrices $A = (a_{ij})$ and $B=(b_{ij}) $ with entries in $B(H)$ for some Hilbert space $H,$ the Schur block product is the  $m \times n$ matrix $ A\square B := (a_{ij}b_{ij}).$ There exists an $m \times n$ matrix $S = (s_{ij})$ with entries from $B(H)$ such that $S$ is a contraction  operator and 
$$ A \square B  = \big(\mathrm{diag}(AA^*)\big)^{\frac{1}{2}}S\big(\mathrm{diag}(B^*B)\big)^{\frac{1}{2}}.$$

The analogus result for the block Schur tensor product $\boxtimes$ defined by Horn and Mathias in \cite{HM} holds too.
 This kind of decomposition of the Schur product seems to be unknown, even for scalar matrices.

Based on the theory of random matrices we show that the set of contractions  $S,$ which may appear in such a  decomposition, is a {\em thin} set in the ball of all contractions.   
\end{abstract}
\subjclass[2010]{ Primary: 15A69, 15B52, 81P68. Secondary: 46N50, 47L25.}
\keywords{block matrix, Schur product, Hadamard product, row/column  bounded, random matrix, polar decomposition, tensor product} 

\maketitle

\section*{INTRODUCTION}

A substantial part of this note is a direct consequence of our previous work on the   block Schur product of matrices with operator entries presented in \cite{Cs}. Unfortunately it has taken us several months to realize that the results we present in \cite{Cs} do have some implications, which are quite easy to obtain, but - hopefully -   interesting. 
On the other hand this article also deals with yet another version of a block Schur product, which was introduced by Horn and Mathias in \cite{HM}. In this block product they replace the ordinary matrix products of the entries in the block matrices  by their spatial tensor products. It turns out that the results on block Schur products, we have obtained lately, do extend to hold in the setting of this block Schur tensor product. A  Schur multiplier is a scalar matrix acting on a block matrix with operator entries, by entry wise products, and we show that the action of a Schur multiplier may be expressed as a a block Schur tensor product, and in this way the results we have obtained are valid in the setting of Schur multipliers too. 

The ordinary Schur product between scalar matrices $R$ and $C$ is denoted $R \circ C.$ The decomposition formula for block Schur products raises a natural question as to how big is the ultra weakly closed convex hull $\cc$ of the set $\{R \circ C\, : R, C \, \in M_n(\bc), \|R\|_r \leq 1 \,\, \|C\|_c \leq 1\,\}$. It turns out that for a given $n$ there exists a self-adjoint operator $Y$ in $M_n(\bc)$ such that $\|Y \| \leq 3 n^{-(1/2)},$ and $Y$ is not in $\cc,$ so in this way we say that  $\cc$ is a thin set. The result is a purely existence result based on the theory of random matrices. This last result was expected from experiments made in {\em Maple } with random matrices, and then it turned out that the theory of random matrices actually can explain what the experiments indicated.  
\section{ON THE BLOCK SCHUR TENSOR PRODUCT}
We emailed our first draft of this note to Roger A. Horn and he responded very quickly and directed  us to  references in \cite{HJ} and  \cite{HM}.  In this section we will focus on the kind of Schur product between block matrices of bounded operators which Horn and Mathias introduces in \cite{HM}. This block product is different from the one Livshits worked on in \cite{Li}, and we recently studied in \cite{Cs}.  
The major difference may in a short way be described via a pair of matrices $A$ and $B$ in $M_n\big(M_k(\bc)\big).$   For these matrices Horn and Mathias defines, what we call, the {\em block Schur tensor product}  $A\boxtimes B $ as the matrix  in $M_n\big(M_k(\bc) \otimes M_k(\bc)\big)$ given by  $(A\boxtimes B)_{ij} := A_{ij} \otimes B_{ij}.$
Livshits defines $A\square B$ as the matrix in $M_n\big(M_k(\bc)\big)$ by $(A\square B)_{ij} : = A_{ij}B_{ij} .$ 
In the scalar case, when $ k = 1$ there is only a notational difference and we get the ordinary Schur product in $M_n(\bc).$ We will now present a formal definition of the block Schur tensor  product $\boxtimes$ given by Horn and Mathias in \cite{HM}, but in the setting of possibly infinite dimensional Hilbert spaces. 

\begin{definition}
Let $I$ and $J$ be index sets, $(E_j)_{j \in J}, \, (K_j)_{j \in J}, \, (F_i)_{i \in I}, \, (L_i)_{i \in I}$ families of Hilbert spaces, $A:= (A_{ij})_{\big((i,j) \in I \times J\big)}, $ $A_{ij} \in B(E_j, F_i)$
\newline and $B:= (B_{ij})_{\big((i,j) \in I \times J\big)} ,$ $B_{ij} \in B(K_j, L_i)$ matrices of bounded operators. The block Schur tensor product $A\boxtimes B $ is defined as the matrix $(A_{ij} \otimes B_{ij})_{\big((i,j) \in I \times J\big)} $ with entries in $B(E_j \otimes K_j, F_i \otimes   L_i).$
\end{definition}

\section{MOTIVATING EXAMPLE}
A scalar Schur multiplier is an $I \times J$  matrix $(s_{ij})$  with scalar entries, and it may  act upon any $I \times J$ matrix $W = (w_{ij}) $ with entries $w_{ij} $ in  vector spaces $V_{ij}$ such that the result is  the matrix $(s_{ij}w_{ij}).$  Such an action of a Schur multiplier  appears in a lot of different settings as for instance in the Example 4.1 in \cite{Ho}, and in the essential parts of the article \cite{Ci}, which was the reason for our interest in Livshits' inequality.  
 The article \cite{Ci}, in turn, has its origin in a problem coming from noncommutative geometry. There one studies  - on a separable Hilbert space - the commutator between a bounded operator $ b$  and a self-adjoint unbounded operator $D$ with compact resolvents. Such a $D$ has a countable real   point spectrum $(\l_i)_{(i \in \bn)}$ and finite dimensional spectral projections $E_i := E(\{\l_i\}).$ When $ b$ is decomposed into the block matrix $b_{ij} := E_i b | E_jH \, \in \, B(E_j, E_i),$ it follows from a simple calculation, that if the commutator $ Db -bD$ is defined and bounded on the domain of definition for $D,$ then it may be decomposed into a block matrix with elements in $B(E_jH, E_iH),$ and the commutator may be described as the Schur product between the scalar matrix $S = (s_{ij} )$ with $s_{ij} = \l_i - \l_j$ and the block matrix $(b_{ij}).$

We will now discus the action of a scalar Schur multiplier on a matrix of bounded operators, and see that this operation may be  be described as a block Schur tensor product. 
Given index sets $I, J,$ Hilbert spaces $(K_j)_{j \in J}, $
$(L_i)_{i \in I,}$  a matrix $B = (b_{ij})$ with entries $b_{ij} \in B(K_j,L_i)$ and a scalar $I \times J$ matrix $S = (s_{ij}).$ In order to obtain the setting of the block Schur tensor product we define Hilbert spaces $E_j := \bc $ and $F_i := \bc$ and identify $s_{ij} $ in the canonical way with an  operator in $B(E_j,F_i).$ The element $( S \boxtimes B)_{ij}  $ acts  according to the definition as follows 
$$ \forall z \in E_j \forall \xi \in K_j: \quad (S\boxtimes B)_{ij} (z \otimes \xi) = s_{ij}z \otimes B_{ij}\xi$$ 
For any Hilbert space $G,$  it is well known that the mapping $z \otimes  \xi \to z\xi$ induces  a linear isometric isomorphism between the Hilbert spaces $\bc \otimes G$ and $G,$ so we see that that {\em the action of the scalar Schur multiplier $S$ is isometrically isomorphic to the action of the matrix $S$ in a block Schur tensor product. }  

\section{LIVSHITS' INEQUALITY FOR THE BLOCK SCHUR TENSOR PRODUCT}

We will not reintroduce the notation from \cite{Cs}, so we will use the words column and row norms right away. In this section  we will fix two sets of indices $I, J,$ they may be finite or infinite, and their elements are denoted $i$ and $j$ respectively. 
Livshits' inequality says 

\begin{theorem}
Let $H$ be a Hilbert space,
 $A=(a_{ij})$ a row bounded matrix
  with elements in $B(H).$  and $B= (b_{ij} ) $ a column bounded matrix
   with elements in $B(H)$ then their block Schur product $A \square B := (a_{ij}b_{ij} ) $ is the matrix of a bounded operator and $\|A \square B\|_{op} \leq \|A\|_r\|B\|_c.$ 
\end{theorem}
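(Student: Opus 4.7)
The plan is to realise $A \square B$ as an honest matrix product $\tilde A \tilde B$ of two block operators whose operator norms are, respectively, $\|A\|_r$ and $\|B\|_c$; then submultiplicativity of the operator norm gives the bound. This is the standard ``block-diagonal expansion'' trick that expresses a Schur-type product as a composition.

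Concretely, I would introduce the following two auxiliary operators. Let $\tilde A$ be the block-diagonal operator acting from $\bigoplus_{(i,j)\in I\times J} H$ to $\bigoplus_{i\in I} H$ whose $i$-th diagonal block is the row $(a_{ij})_{j\in J}$ viewed as a bounded operator from $\bigoplus_{j\in J} H$ into $H$. Let $\tilde B$ be the block-diagonal operator from $\bigoplus_{j\in J} H$ to $\bigoplus_{(i,j)\in I\times J} H$ whose $j$-th diagonal block is the column $(b_{ij})_{i\in I}$ viewed as a bounded operator from $H$ into $\bigoplus_{i\in I} H$. The row-boundedness hypothesis is exactly the statement that each row of $A$ is a bounded operator with norm at most $\|A\|_r$, so the block-diagonal $\tilde A$ is bounded with $\|\tilde A\|_{op}=\sup_i \|(a_{ij})_{j}\|_{op}=\|A\|_r$; symmetrically $\|\tilde B\|_{op}=\|B\|_c$.

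The composition $\tilde A\tilde B$ makes sense because the intermediate Hilbert spaces agree, and a direct computation on a vector $\xi=(\xi_j)_{j\in J}\in \bigoplus_{j\in J} H$ gives
\[
(\tilde A\tilde B\,\xi)_i \;=\; \sum_{j\in J} a_{ij}\bigl((\tilde B\xi)_{(i,j)}\bigr) \;=\; \sum_{j\in J} a_{ij}b_{ij}\xi_j \;=\; \bigl((A\square B)\xi\bigr)_i,
\]
so $A\square B=\tilde A\tilde B$ as bounded operators. Submultiplicativity then yields
\[
\|A\square B\|_{op} \;\le\; \|\tilde A\|_{op}\,\|\tilde B\|_{op} \;=\; \|A\|_r\,\|B\|_c.
\]

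The only delicate point, and the one I would spell out carefully, is the infinite-dimensional bookkeeping when $I$ or $J$ is infinite: one must check that the block-diagonal assembly of the rows really defines a bounded operator on the full direct sum $\bigoplus_{(i,j)} H$ (equivalently, that the pointwise supremum of the row norms is the operator norm of the assembly), and that the product $\tilde A\tilde B$ is a genuinely defined bounded operator rather than merely a formal expression on finitely supported vectors. Both facts are standard, but they are the crux of turning the algebraic identity into the analytic inequality. The scalar-entry statement and the finite-dimensional case are immediate corollaries of the same factorisation.
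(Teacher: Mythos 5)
Your proof is correct and self-contained, which is actually more than the paper offers for this particular statement: the paper quotes it as Livshits' inequality, citing \cite{Li} and the stronger Theorem 2.9(v) of \cite{Cs}, and only carries out a proof for the analogous block Schur tensor product result (Theorem \ref{ThBST}). Your route is the same factorization philosophy the paper uses there, but implemented in a more elementary, tensor-free way. The paper factors $A\boxtimes B=\big(q\l(A)\big)\big(\r(B)p\big)$, where $\l(A)=A\otimes I_L$ and $\r(B)=I_E\otimes B$ act on full tensor products and are cut down by the projections $p,q$ onto the diagonal subspaces $\oplus_j E_j\otimes K_j$ and $\oplus_i F_i\otimes L_i$; the bounds $\|q\l(A)\|\leq\|A\|_r$ and $\|\r(B)p\|\leq\|B\|_c$ then require the explicit orthogonality computation of Lemma \ref{lemma}. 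You instead factor $A\square B=\tilde A\tilde B$ through $\oplus_{(i,j)\in I\times J}H$, where $\tilde A$ and $\tilde B$ are genuinely block-diagonal assemblies of the rows of $A$ and the columns of $B$, so the identities $\|\tilde A\|_{op}=\|A\|_r$ and $\|\tilde B\|_{op}=\|B\|_c$ are immediate from the fact that a block-diagonal operator has norm equal to the supremum of its block norms; the orthogonality that the paper verifies by hand is built into your structure. What the paper's heavier tensor formulation buys is generality: the factors $A\otimes I_L$ and $I_E\otimes B$ make sense for varying fiber spaces $E_j,F_i,K_j,L_i$, and deliver the $\boxtimes$ analogue together with the refined bilinear estimate of Theorem \ref{ThBST}(ii) in one stroke. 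It is worth noting that your factorization yields that refinement too, at no extra cost: $|\langle(A\square B)\xi,\gamma\rangle|=|\langle\tilde B\xi,\tilde A^*\gamma\rangle|\leq\|\tilde B\xi\|\,\|\tilde A^*\gamma\|$, and one computes $\|\tilde B\xi\|^2=\langle\mathrm{diag}(B^*B)\xi,\xi\rangle$ and $\|\tilde A^*\gamma\|^2=\langle\mathrm{diag}(AA^*)\gamma,\gamma\rangle$, which is exactly the inequality underlying the paper's decomposition theorems.
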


This inequality appears in a weaker version already in Schur's original work \cite{Sc}. In satz II, on page 8, where Schur considers two infinite scalar matrices over $\bn$ given as $(u_{pq} ) $ and $(v_{pq} ) $ such that for both matrices both the column norm and the row norm are finite. Then their Schur product is a bounded operator and we may formulate Schur's result as 

$$ \|(u_{pq}v_{pq})\|_{op} \leq \big( \max\{\|u_{pq}\|_c,\|u_{pq}\|_r \}\big) \big( \max\{\|v_{pq}\|_c,\|v_{pq}\|_r \}\big)  .$$ 

In the email from Horn, which was mentioned above, he directed us to Theorem 5.5.3 in \cite{HJ}, and this theorem presents Livshits' inequality in the scalar case, and moreover its proof is nearly identical  to the elementary proof, we present in Section 3 of \cite{Cs}, of  statement (v) in Theorem 2.9  of \cite{Cs}. This statement is - on the other hand - a bit stronger than Livshits' inequality.  
We will now present the natural  extension of this statement to the setting of block Schur tensor products. Again we will rely on the notation of \cite{Cs} even though there are some obvious diferences between matrices $A = (a_{ij}) $ with elements in $B(E_j,F_i) $ and those with elements $a_{ij}$  in $B(H)$ for a fixed $H.$ It is easy to see that in both cases we get, if  such a matrix $A$ is row bounded, then the matrix  $AA^*$ is well defined, as a matrix with bounded operators $(AA^*)_{ij}$ as enttries satifying $\sup\{\|(AA^*)_{ij}\|\} = \|A\|^2_r.$ The main   diagonal diag$(AA^*)$ is a positive bounded diagonal operator of norm $\|A\|^2_r.$ 
 If $B$ is a column bounded matrix with operator entries, then $B^*B$ is a matrix with bounded operator entries and bounded positive  main diagonal diag$(B^*B),$  such that $\|$diag$(B^*B)\| = \|B\|^2_c.$ We will let $1_L$ denote the $I \times I$ matrix,   which is the unit on $\oplus_{i \in I} L_i,$  i.e. a diagonal matrix with the units on $L_i$ in the diagonal. Similarly we let $1_E$ denote the diagonal matrix on $\oplus_{j \in J} E_j $ with the unit on $E_j$ in the $j$th diagonal entry. The operators $1_L$ and $1_E$ are of course, and in a natural way, the unit operators on  $ L:= \oplus_{i \in I}L_i$ and $E:= \oplus_{j \in J} E_j,$ and in the coming arguments $1_E, \, 1_L$ will sometimes denote a unit operator on a Hilbert space and sometimes a square block matrix with units in the main diagonal.  
 
 In the article \cite{Cs} we showed how the block Schur product has a natural representation as a completely bounded bilinear operator, such that for a Hilbert space $H$ and any pair $A, B $ of block matrices in $M_n(B(H))$  we have  $A \square B \, = V^* \, \l(A) F \l(B) V,$ where $\l$ is a $^*-$representation  of $M_n \otimes B(H)$ on the  Hilbert space $K$ defined by $K := \bc^n \otimes H \otimes \bc^n,$  $F$ is a self-adjoint unitary on $K$ and $V$ an isometry of $\bc^n \otimes H$ into $K.$ 
 
 The bounded matrices $X = (x_{ij})$ with $x_{ij} \in B(E_j, F_i)$ do not form an algebra in any obvious way, so the word {\em representation, } which was used above can not be used here, but it is still possible to define linear mappings $\l$ and $\r$ which play a similar roles in the block Schur tensor product as the ones $\l$ plays respectively to the left and to the right above.  
The mappings $ \l $ and $\r$ mimick the left and right regular representations known from discrete groups and they are defined below.

\begin{definition}
Let  $(E_j)_{j \in J}, \, (F_i)_{i \in I}, \,$ families of Hilbert spaces, then \newline $Mat((E_j, F_i))$ denotes all matrices $A = (A_{ij}) $   with $A_{ij} \in B(E_j, F_i).$  
\end{definition}

\begin{definition}
Let  $(E_j)_{j \in J}, \, (K_j)_{j \in J}, \, (F_i)_{i \in I}, \, (L_i)_{i \in I}$ be families of Hilbert spaces. 
\begin{itemize}
\item[(i)] $E:= \oplus_{j \in J} E_j,$ and $e_j$ is the orthogonal projection from $E$ onto $E_j.$
 
\item[(ii)] $F:= \oplus_{i \in I} F_i,$ and $f_i$ is the orthogonal projection from $F$ onto $F_i.$
 
 \item[(iii)] $K:= \oplus_{j \in J} K_j,$ and $k_j$ is the orthogonal projection from $K$ onto $K_j.$
 
 \item[(iv)] $L:= \oplus_{i \in I} L_i,$ and $l_i$ is the orthogonal projection from $L$ onto $L_i.$

\item[(v)] The linear mapping $\l$ of $Mat(E_j, F_i) $ to $Mat(E_j \otimes L, F_i \otimes L) $ is defined by \newline $\l\big((A_{ij} )\big) := (A_{ij} \otimes 1_L).$ 
\item[(vi)] The linear mapping $\r$ of $Mat(K_j, L_i) $ to $Mat(E \otimes K_j,  E \otimes , L_i ) $ is defined by \newline $\r\big((B_{ij} )\big) := (1_E \otimes B_{ij} ).$ 
\item[(vii)] The subspace $P$ of $E \otimes K$ is the closed linear space spanned by the family of pairwise orthogonal subspaces $E_j \otimes K_j,$ and the orthogonal projection of $E \otimes K$ onto $P$ is denoted $p.$ The isometry $v$ of $P$ into $E \otimes K$ is the natural embedding.  
\item[(viii)] The subspace $Q$ of $F \otimes L$ is the closed linear space spanned by the family of pairwise orthogonal subspaces $F_i \otimes L_i.$  The isometry $w$ of $Q$ into $F \otimes L$ is the natural embedding. 
\end{itemize} 
\end{definition}
 We remark without proof the following proposition.
 \begin{proposition} 
 \begin{itemize} 
 \item[(i)] The projection $p$ is given as the strongly convergent sum $p = \sum_{j \in J} e_j \otimes k_j $ in $B(E \otimes K).$ 
 
 \item[(ii)] The projection $q$ is given as the strongly convergent sum $q = \sum_{i \in I} f_i\otimes l_i $ in $B(F \otimes L).$  
 \end{itemize}
 \end{proposition}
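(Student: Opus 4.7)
The plan is to prove (i); part (ii) follows by the identical argument with $F,L,f_i,l_i$ in place of $E,K,e_j,k_j$. The strategy is to recognize $\{e_j\otimes k_j\}_{j\in J}$ as a family of pairwise orthogonal projections whose ranges span $P$, and then invoke the standard fact that such a family sums strongly to the projection onto the closed linear span of its ranges.

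First I would observe that for each fixed $j$, the operator $e_j\otimes k_j \in B(E\otimes K)$ is a self-adjoint idempotent, hence an orthogonal projection, and its range is exactly $E_j\otimes K_j$ (viewed as a closed subspace of $E\otimes K$ via the canonical embedding induced by $E_j\hookrightarrow E$ and $K_j\hookrightarrow K$). Next, for $j\neq j'$ in $J$ one has $e_je_{j'}=0$ and $k_jk_{j'}=0$, so
\[
(e_j\otimes k_j)(e_{j'}\otimes k_{j'}) \;=\; e_je_{j'}\otimes k_jk_{j'} \;=\; 0,
\]
which shows the family is mutually orthogonal (consistent with the pairwise orthogonality of the subspaces $E_j\otimes K_j$ that was built into the definition of $P$).

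Once pairwise orthogonality is in hand, the standard Hilbert space fact says that for any family $(r_\alpha)$ of mutually orthogonal projections on a Hilbert space, the net of finite partial sums $\sum_{\alpha\in F} r_\alpha$ (indexed by finite $F$ ordered by inclusion) is uniformly bounded by $1$ and increasing, hence converges strongly to a projection $r$, whose range is precisely $\overline{\mathrm{span}}\bigcup_\alpha r_\alpha(H)$. Applied to $r_j:=e_j\otimes k_j$, this yields strong convergence of $\sum_{j\in J} e_j\otimes k_j$ to the orthogonal projection onto $\overline{\mathrm{span}}\bigcup_{j\in J}(E_j\otimes K_j)$. By the definition of $P$ in item (vii), this closed span is exactly $P$, and the limiting projection is therefore $p$.

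I do not anticipate any real obstacle: the only things to be careful about are the identification of $e_j\otimes k_j$ with the projection onto $E_j\otimes K_j$ inside $E\otimes K$ (which is routine from the universal property of the Hilbert space tensor product applied to the orthogonal decompositions $E=\oplus E_j$, $K=\oplus K_j$), and the fact that ``strongly convergent sum'' here is interpreted as the strong limit of finite partial sums, which is the only meaningful notion for an uncountable $J$. Part (ii) is obtained by replacing $(E_j,K_j,e_j,k_j)$ with $(F_i,L_i,f_i,l_i)$ throughout.
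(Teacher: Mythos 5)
Your proof is correct. The paper in fact states this proposition explicitly without proof (``We remark without proof the following proposition''), and your argument --- that the operators $e_j\otimes k_j$ form a family of pairwise orthogonal projections whose finite partial sums increase to the projection onto $\overline{\mathrm{span}}\bigcup_{j\in J}(E_j\otimes K_j)=P$ --- is precisely the standard argument the author leaves to the reader, so there is nothing to compare against and no gap to report.
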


In the rest of this section we will assume that the families of Hilbert spaces $ E_i, F_j, K_i, L_j$ are given and the notation just introduced in the definitions above will be used in this context.  The next lemma is the basis for the possibility to express the coming main theorem in a meaningful way even if the matrices involved are unbounded as ordinary operators. It should be remarked that if the index sets are all finite then the matrices involved will represent bounded operators.

\begin{lemma} \label{lemma}
\begin{itemize}
\item[]
\item[(i)] Let $A = (A_{ij} ) $ be a row bounded element in $Mat(E_j, F_i),$ then $q\l(A)$ is well defined as a bounded operator in $B\big((E \otimes K), (F \otimes L)\big)$ and $\|q\l(A)\| \leq  \|A\|_r.$ \newline
If all the spaces $F_i$ are non zero-dimensional then $\|q\l(A)\| = \|A\|_r.$ 
\item[(ii)] Let $B = (B_{ij} ) $ be a column  bounded element in $Mat(K_i, L_j),$ then $\r(B)p$ is well defined as a bounded operator in $B\big((E \otimes K), (F \otimes L)\big)$ and $\|\r(B)p\| \leq  \|B\|_c.$
 \newline
If all the spaces $E_j$ are non zero-dimensional then $\|\r(B)p\| = \|B\|_c.$ 
\end{itemize}

\end{lemma}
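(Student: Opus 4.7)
The plan is to reduce the norm bound for $q\lambda(A)$ to the norm of the positive operator $(q\lambda(A))(q\lambda(A))^* = q\lambda(A)\lambda(A)^*q$, and then exploit the block-diagonal nature of $q$. At the formal matrix level, $\lambda(A)\lambda(A)^*$ has $(i,k)$ block $(AA^*)_{ik}\otimes I_L = \bigl(\sum_j A_{ij}A_{kj}^*\bigr)\otimes I_L$. Although $AA^*$ need not be a bounded operator, its main diagonal $\mathrm{diag}(AA^*)$ is, by hypothesis on row boundedness, a bounded positive operator on $F$ of norm exactly $\|A\|_r^2$, and so $\mathrm{diag}(AA^*)\otimes I_L$ is bounded on $F\otimes L$ of the same norm.

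The crucial observation is that compression by $q=\sum_i f_i\otimes l_i$ annihilates all off-diagonal blocks: for $i\neq k$ one has $(f_i\otimes l_i)\bigl((AA^*)_{ik}\otimes I_L\bigr)(f_k\otimes l_k) = (AA^*)_{ik}\otimes l_il_k = 0$, because $L_i$ and $L_k$ are orthogonal subspaces of $L$. Only diagonal contributions survive, so for $\eta\in F\otimes L$ supported on finitely many $F_i$ a direct sesquilinear-form computation yields $\|\lambda(A)^*q\eta\|^2 = \sum_i\langle((AA^*)_{ii}\otimes l_i)\eta_i,\eta_i\rangle \leq \|A\|_r^2\|\eta\|^2$. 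Hence $\lambda(A)^*q$ extends uniquely to a bounded operator of norm at most $\|A\|_r$, and the same is true for its adjoint $q\lambda(A) = (\lambda(A)^*q)^*$.

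For the reverse inequality, fix $\varepsilon>0$, choose $i_0$ with $\|(AA^*)_{i_0i_0}\|>\|A\|_r^2-\varepsilon$, and a unit vector $\eta\in F_{i_0}$ (non-empty by hypothesis) with $\langle(AA^*)_{i_0i_0}\eta,\eta\rangle$ close to $\|(AA^*)_{i_0i_0}\|$; pair this with a unit vector $\zeta\in L_{i_0}$, and form the test vector $\xi\otimes\zeta\in E\otimes L$, where $\xi\in E$ has $j$th component $\xi_j:=c^{-1}A_{i_0 j}^*\eta$ with $c:=\bigl(\sum_j\|A_{i_0j}^*\eta\|^2\bigr)^{1/2}$ chosen so that $\|\xi\|=1$. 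For $i\neq i_0$ the vector $l_i\zeta$ vanishes, so the only surviving component of $q\lambda(A)(\xi\otimes\zeta)$ sits in $F_{i_0}\otimes L_{i_0}$ and equals $c^{-1}(AA^*)_{i_0 i_0}\eta\otimes\zeta$; since $c^2 = \langle(AA^*)_{i_0i_0}\eta,\eta\rangle$, a short computation shows its norm is arbitrarily close to $\|A\|_r$ when $\eta$ is chosen near the top of the spectrum of $(AA^*)_{i_0i_0}$. Part (ii) follows by an entirely analogous argument with the roles of rows/columns, $\lambda/\rho$, $AA^*/B^*B$, and $q/p$ interchanged, the orthogonality of the $L_i$'s being replaced by that of the $E_j$'s. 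The only real obstacle is the bookkeeping behind the formal identity $\lambda(A)\lambda(A)^*=\lambda(AA^*)$ when $A$ is only row bounded, which is handled throughout by working on vectors supported on finitely many $E_j$'s and passing to strong limits of the positive partial sums $\sum_{j\in J_0}A_{ij}A_{ij}^*$.
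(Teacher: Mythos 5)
Your proof is essentially the paper's argument run in mirror image. The paper proves item (ii) directly, expanding $\|\rho(B)\Xi\|^2$ for $\Xi\in P$ and using that the vectors $(I_E\otimes B_{ij})\xi_j$ lie in the pairwise orthogonal subspaces $E_j\otimes L_i$ as $j$ varies, so that all cross terms vanish and only $\mathrm{diag}(B^*B)$ survives; item (i) is then obtained by applying (ii) to $A^*$. You prove (i) directly through $\|\lambda(A)^*q\eta\|^2$ (equivalently, through $(q\lambda(A))(q\lambda(A))^*$), where the same orthogonality --- now of the $L_i$ legs as $i$ varies --- kills the off-diagonal blocks $(AA^*)_{ik}\otimes l_il_k$ and leaves $\mathrm{diag}(AA^*)$, and you invoke symmetry for (ii). This is the same key idea, not a different route; your bookkeeping (finitely supported vectors, strong limits of the increasing bounded partial sums $\sum_j A_{ij}A_{ij}^*$) is exactly what is needed, defining $q\lambda(A)$ as $(\lambda(A)^*q)^*$ is a legitimate and arguably cleaner reading of ``well defined,'' and your equality test vector works, since with $c^2=\langle (AA^*)_{i_0i_0}\eta,\eta\rangle$ the surviving component $c^{-1}(AA^*)_{i_0i_0}\eta\otimes\zeta$ has norm at least $c$.

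One step deserves a concrete flag. In the equality part of (i) you take a unit vector $\zeta\in L_{i_0}$, but the hypothesis of the lemma only grants that the spaces $F_i$ are nonzero (your parenthetical ``non-empty by hypothesis'' attaches to $\eta\in F_{i_0}$, which is the harmless requirement, since it is automatic once $\|(AA^*)_{i_0i_0}\|>0$); nothing stated prevents $L_{i_0}=\{0\}$. This is not a gap you could have argued around, because with the hypothesis as printed the statement is false: take $I=J=\{1\}$, $E_1=F_1=K_1=\mathbb{C}$, $L_1=\{0\}$, $A_{11}=1$; then $\|A\|_r=1$ while $F\otimes L=\{0\}$, so $q\lambda(A)=0$ even though $F_1\neq\{0\}$. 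The hypothesis in (i) should read ``all the spaces $L_i$ are non zero-dimensional,'' and indeed one can check that $\|q\lambda(A)\|^2=\sup\{\|(AA^*)_{ii}\|\,:\,L_i\neq\{0\}\}$; this is also exactly what the paper's own derivation of (i) from (ii) produces, since the condition in (ii) sits on the spaces supplying the first leg of the elementary test tensors (the $E_j$), and under the duality $A\mapsto A^*$ these become the $L_i$. So your argument correctly proves the amended statement, but you should state explicitly that you need $L_{i_0}\neq\{0\}$ and that the hypothesis as written does not supply it.
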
 
\begin{proof}
We will only prove item (ii), since item (i)  follows from item (ii), applied to $A^*.$
 
Let $\Xi $ in $P$ be given as  $\Xi= (\xi_j)$ with $\xi_j \in E_j \otimes K_j,$ then for  given $i$ in $I$ and $j$ in $J$ we have $(1_E \otimes B_{ij}) \xi_j $ is in $E_j \otimes L_i$  and then for two indices $s, t$ in $J$ we have  $$ \langle(1_E \otimes B_{is})\xi_s, \,(1_E \otimes B_{it}) \xi_{t} \rangle \, = \,\begin{cases} 0 \, \text{ if } s \neq t \\
\|(1_E \otimes B_{is}) \xi_s\|^2 \, \text{ if } s =t. \end{cases} $$ 
We can then estimate the norm of $\r(B)p$ by the following majorizations 
\begin{align}
\|\r(B) \Xi\|^2 \,& =  \,
\sum_{i \in I}\| \sum_{j \in J} (1_E \otimes B_{ij} )\xi_j\|^2 \\ \notag & = \,\sum_{i \in I} \sum_{j \in J}\|(1_E \otimes B_{ij} )\xi_j\|^2 \\ 
\notag &= \,\sum_{j \in J} \sum_{i \in I}\| (1_E \otimes B_{ij} )\xi_j\|^2 \\ 
\label{diagelement} &= \,\sum_{j \in J} \langle \big( 1_E \otimes (\sum_{i \in I}B_{ij}^* B_{ij})\big)\xi_j, \xi_j \rangle \\ 
\notag & \leq \|B\|_c^2\sum_{j \in J} \|\xi_j\|^2 \\ 
\notag & = \|B\|_c^2 \|\Xi\|^2 ,
\end{align}
so $\|\r(B)p\| \, \leq \, \|B\|_c.$ To see the opposite inequality in the case when all the spaces $E_j$ are non zero,  we choose for each $j$ in $J$ a  unit vector $\a_j$ in $E_j$ and then  

\begin{align}
\|B\|^2_c \, & = \, \underset{j \in J, \, \b_j \in K_j, \, \|\b_j\|= 1}{\sup} \sum_{i \in I} \|B_{ij}\b_j\|^2 \\
\notag & = \, \underset{j \in J, \, \b_j \in K_j, \, \|\b_j\|= 1}{\sup} \sum_{i \in I} \|(1_E \otimes B_{ij}) (\a_j \otimes \b_j)\|^2 \\
\notag & = \, \underset{j \in J, \, \b_j \in K_j, \, \|\b_j\|= 1}{\sup} \|(1_E \otimes B) (\a_j \otimes \b_j)\|^2 \\
\notag & \leq \, \|\r(B)p\|^2,
\end{align} 
and the lemma follows.
\end{proof}

  Before we present the theorem, we will like to remark, that the  expression  $w^*\l(A)\r(B)v$  is  meaningful if the set of indices $I, J$ are both finite. If one or both sets of indices are infinite, but $A$ is row bounded and $B$ is column bounded, then, since  $vv^* = p$ and $ww^* = q,$ the lemma shows that   the expression \newline $(w^*\r(B))(\l(A)v)$ makes sense as a product of two bounded operators.

\begin{theorem} \label{ThBST} Let  $(E_j)_{j \in J}, \, (K_j)_{j \in J}, \, (F_i)_{i \in I}, \, (L_i)_{i \in I}$
be  families of Hilbert spaces, \newline $A= (A_{ij}) \in Mat(E_j,F_i)$ be a row bounded matrix and $B= (B_{ij}) \in Mat(K_j, L_i)$ be a column bounded matrix  then $A \boxtimes B$ is the matrix of the bounded operator \newline $\big(w^*\r(A)\big)\big(\l(B)v\big)$ and 
\begin{itemize} 
\item[(i)]  
 $\|A\boxtimes B\|_{op} \leq \|A\|_r \|B\|_c .$
 \item[(ii)] For any vector $\Xi \in \oplus_{j \in J} E_j \otimes K_j$ and any vector $\Gamma \in \oplus_{i \in I} F_i\otimes L_i: $
 \begin{align*}
 |\langle (A\boxtimes B) \Xi, \Gamma \rangle |& \,\leq \| \bigg(\big(\mathrm{diag}(AA^*)\big)^{\frac{1}{2}} \boxtimes 1_L \bigg)\Gamma\| \| \bigg(1_E \boxtimes \big(\mathrm{diag}(B^*B)\big)^{\frac{1}{2}}  \bigg)\Xi \| \\
&\, = \| \bigg(\big(\mathrm{diag}(AA^*)\big)^{\frac{1}{2}} \otimes 1_L \bigg)\Gamma\| \| \bigg(1_E \otimes \big(\mathrm{diag}(B^*B)\big)^{\frac{1}{2}}  \bigg)\Xi \|  
  \end{align*}
 \end{itemize} 

\end{theorem}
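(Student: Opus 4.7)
The plan is to verify the factorisation $A\boxtimes B = (w^*\l(A))(\r(B)v)$ by direct computation on simple tensors in $P$, and then to read off (i) from Lemma \ref{lemma} and (ii) from Cauchy--Schwarz combined with the norm identity already worked out in the proof of that lemma. (I read the stated product $(w^*\r(A))(\l(B)v)$ as a typo: by the types of $\l$ and $\r$ the composable product is $(w^*\l(A))(\r(B)v)$.)

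First I would unpack the composition on an elementary tensor. Fix $j_{0}\in J$ and take $\xi=\a\otimes\b\in E_{j_{0}}\otimes K_{j_{0}}\subset P$, viewed as the element of $P$ whose only nonzero component sits in the $j_{0}$ slot. Then $v\xi=\a\otimes\b$ now sits in $E\otimes K$. Applying $\r(B)=(I_{E}\otimes B_{ij})$, which I view as a matrix $\oplus_{j}E\otimes K_{j}\to\oplus_{i}E\otimes L_{i}$, produces the vector whose $i$-th component is $(I_{E}\otimes B_{ij_{0}})(\a\otimes\b)=\a\otimes B_{ij_{0}}\b\in E_{j_{0}}\otimes L_{i}$. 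Next, re-decomposing $E\otimes L=\oplus_{j}E_{j}\otimes L$, the only nonzero $j$-block of this vector is the $j_{0}$-block, equal to $\a\otimes(\oplus_{i}B_{ij_{0}}\b)$. Applying $\l(A)=(A_{ij}\otimes I_{L})$ then yields a vector in $F\otimes L=\oplus_{i}F_{i}\otimes L$ whose $i$-th block is $A_{ij_{0}}\a\otimes(\oplus_{i'}B_{i'j_{0}}\b)$. Finally, $w^{*}$ projects each $F_{i}\otimes L=\oplus_{i'}F_{i}\otimes L_{i'}$ onto $F_{i}\otimes L_{i}$, so only the $i'=i$ summand survives and we are left with $A_{ij_{0}}\a\otimes B_{ij_{0}}\b=(A_{ij_{0}}\otimes B_{ij_{0}})\xi$, which is precisely the $i$-th component of $(A\boxtimes B)\xi$. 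Linearity and density then give the identity on all of $P$.

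For (i), Lemma \ref{lemma} gives $\|q\l(A)\|\le\|A\|_{r}$ and $\|\r(B)p\|\le\|B\|_{c}$. Since $w$ and $v$ are isometries with $ww^{*}=q$ and $vv^{*}=p$, a standard C$^{*}$-identity computation gives $\|w^{*}\l(A)\|=\|q\l(A)\|$ and $\|\r(B)v\|=\|\r(B)p\|$, so the composition is bounded and $\|A\boxtimes B\|_{op}\le\|A\|_{r}\|B\|_{c}$. For (ii), for $\Xi\in P$ and $\Gamma\in Q$ I rewrite
\begin{equation*}
\langle(A\boxtimes B)\Xi,\Gamma\rangle=\langle w^{*}\l(A)\r(B)v\Xi,\Gamma\rangle=\langle\r(B)v\Xi,\l(A)^{*}w\Gamma\rangle
\end{equation*}
and apply Cauchy--Schwarz. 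The right-hand factor $\|\r(B)v\Xi\|^{2}$ is computed exactly as in line \eqref{diagelement} of the proof of Lemma \ref{lemma}: it equals $\sum_{j}\langle(I_{E_{j}}\otimes\sum_{i}B_{ij}^{*}B_{ij})\xi_{j},\xi_{j}\rangle$, which by construction is $\|(I_{E}\boxtimes(\mathrm{diag}(B^{*}B))^{1/2})\Xi\|^{2}$; the $\otimes$-form then follows because $\Xi$ already lies in the diagonal subspace $P$, so the diagonal operator $I_{E}\otimes(\mathrm{diag}(B^{*}B))^{1/2}$ on $E\otimes K$ acts on $v\Xi$ with the same value. A symmetric calculation, using $A$ in place of $B$ and $w$ in place of $v$, treats the other factor and produces the claimed inequality together with the equality of the $\boxtimes$- and $\otimes$-forms.

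The main obstacle is purely organisational rather than analytic: the spaces $E\otimes K$, $E\otimes L$ and $F\otimes L$ each carry two distinct direct-sum decompositions (e.g.\ $E\otimes L=\oplus_{j}E_{j}\otimes L=\oplus_{i}E\otimes L_{i}$), and the operators $\l(A)$, $\r(B)$, $v$, $w^{*}$ each privilege a different one. The content of the identification of $A\boxtimes B$ with $w^{*}\l(A)\r(B)v$ is precisely that the compressions $v$ and $w^{*}$ kill exactly those ``off-diagonal'' cross-tensor terms $A_{ij}\a\otimes B_{i'j}\b$ with $i\neq i'$; keeping this bookkeeping straight, rather than any quantitative estimate, is the delicate step.
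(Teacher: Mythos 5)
Your proposal is correct and takes essentially the same route as the paper's own proof: you verify that the entries of the composition are $A_{ij}\otimes B_{ij}$, deduce (i) from Lemma \ref{lemma} via $ww^{*}=q$, $vv^{*}=p$, and obtain (ii) by Cauchy--Schwarz together with the diagonal identity (\ref{diagelement}) from the lemma's proof, which is exactly the paper's argument; your reading of $(w^{*}\r(A))(\l(B)v)$ as a typo for $(w^{*}\l(A))(\r(B)v)$ agrees with what the paper's proof actually computes. The only cosmetic difference is that you check the factorization on elementary tensors with explicit block bookkeeping, whereas the paper does the same computation in one line at the matrix-entry level.
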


\begin{proof}
It follows directly from Lemma \ref{lemma} and the few lines following its proof, that \newline $\big(w^*\r(A)\big)\big(\l(B)v\big)$ is a bounded operator and the matrix of this operator has entries which are given as 
\begin{align}
 (f_i \otimes l_i)(A\otimes 1_L)(1_E \otimes B )| (E_j \otimes K_j) \, &= \, (f_i \otimes l_i)(A \otimes B)| (E_j \otimes K_j) \\
\notag  &= \, A_{ij}  \otimes B_{ij} | E_j \otimes K_j  \\
\notag &= ( A \boxtimes B)_{ij},  
 \end{align} 
 so the matrix of the bounded operator  $\big(w^*\r(A)\big)\big(\l(B)v\big)$ is $A \boxtimes B.$ We will identify a bounded operator with its matrix, when no confusion can occur and then we may write   $A \boxtimes B \, =\, \big(w^*\r(A)\big)\big(\l(B)v\big),$ so the result (i) follows from  Lemma \ref{lemma}.

For the proof of item (ii) we remark that for the given vectors $\Xi, \Gamma$ the decomposition of $A \boxtimes B $ gives 
\begin{align}
& \, \, \, \quad|\langle(A \boxtimes B) \Xi, \Gamma\rangle|^2 \\ \notag & = \, |\langle\r(B) \Xi, \l(A)^*\Gamma\rangle|^2\\
\notag & \le \, \|\r(B) \Xi\|^2  \| \l(A)^*\Gamma\rangle\|^2, \text{ by the equality } ( \ref{diagelement})  \\ 
\notag & = \,\bigg(\sum_{j \in J} \langle\big( 1_E \otimes  (\sum_{i \in I}B_{ij}^* B_{ij})\big)\xi_j, \xi_j\rangle\bigg)\bigg(\sum_{i \in I} \langle \big((\sum_{j \in J}A_{ij} A_{ij}^*)\otimes  1_L\big)\gamma_i, \gamma_i \rangle\bigg) \\
\notag & = \, \langle \big(1_E \otimes \mathrm{diag}(B^*B)\big) \Xi, \Xi \rangle \langle \big(\mathrm{diag}(AA^*) \otimes 1_L\big) \Gamma, \Gamma \rangle \\
\notag & =  \, \|\big(1_E \otimes \mathrm{diag}(B^*B)\big)^{\frac{1}{2}} \Xi\|^2 \|\big(\mathrm{diag}(AA^*)\otimes 1_L\big)^{\frac{1}{2}} \Gamma\|^2 \\
\notag & =  \, \|\bigg(1_E \otimes \big( \mathrm{diag}(B^*B)\big)^{\frac{1}{2}}\bigg) \Xi\|^2 \|\bigg(\big(\mathrm{diag}(AA^*)\big)^{\frac{1}{2}}\otimes 1_L\big)\bigg) \Gamma\|^2 \\
\notag & \quad \, \,\text { since } \Xi \in Q \text{ and } \Gamma \in P\\
\notag & =  \, \|\bigg(1_E \boxtimes \big( \mathrm{diag}(B^*B)\big)^{\frac{1}{2}}\bigg) \Xi\|^2 \| \|\bigg(\big(\mathrm{diag}(AA^*)\big)^{\frac{1}{2}}\boxtimes 1_L\big)\bigg)\Gamma\|^2 ,
\end{align}
and the theorem follows.
\end{proof}

\section{DECOMPOSITIONS OF THE BLOCK SCHUR AND BLOCK SCHUR TENSOR PRODUCT} 

The statement in item  (ii) of Theorem \ref{ThBST}, and the analogous result for block Schur products  $|\langle(A \square B) \Xi, \Gamma\rangle | \leq \| \big(\mathrm{diag}(AA^*)\big)^{\frac{1}{2} } \Gamma\| \| \big(\mathrm{diag}(B^*B)\big)^{\frac{1}{2} } \Xi\| $ from Theorem 2.9, (v)
of \cite{Cs}  tell us that a certain factorization of the block Schur tensor product and of the block Schur product must exist. We will show that this factorization may be computed in a simple  explicit form. We will deal with the block Schur product first, because it is notationally simpler than the block Schur tensor product. 

We will keep the notation which was introduced in section 4, so let $i$ be an index  in $I,$  and let $i(A)$ denote the $i$th row of $A ,$ i.e. $i(A) = (a_{ij})_{j \in J}. $ Since we assume that $A$ is row bounded we may, and will, just as well consider $i(A) $ as the matrix of  a bounded operator in $B\big( \ell^2(J, H), H\big).$ We will also let $i(A)$ denote this operator, and as a bounded operator it has a polar decomposition to the right $i(A) = |i(A)^*|i(V) , $  such that $|i(A)^*| = \big(\mathrm{diag}(AA^*)\big)^{\frac{1}{2}}_{ii}, $  and $i(V) = (v_{ij})_{j \in J} $ is a partial isometry in  $B\big( \ell^2(J, H), H\big).$  We may then combine all the row matrices $i(V)$ for $i $ in $I$ into a row bounded matrix $V =(v_{ij} ) = (i(V)_j)$ such that each row is the matrix of a partial isometry and we get 
 $$ A = \big(\mathrm{diag}(AA^*)\big)^{\frac{1}{2}}V.$$

In an analogous way, for a column bounded matrix $B = (b_{ij} ) $ and for each fixed index $j $ in $J,$ we can make a polar decomposition - to the left - of the $j'$th column $B(j)$ of $ B $ such that $B(j) = W(j)\big(\mathrm{diag}(B^*B)\big)^{\frac{1}{2}}_{jj} $ and the column matrix $(W(j)_i )_{i \in I}  $ is a partial isometry in $B(H, \ell^2(I,H)).$
 We may the collect the columns  $W(j) $ into a column  bounded matrix $W = (w_{ij}) = (W(j)_i) $ such that each column is a partial isometry and $$ B = W\big(\mathrm{diag}(B^*B)\big)^{\frac{1}{2}}.$$

\begin{theorem} \label{bSDec}
Let $H$ be a Hilbert space, $I, J$ sets of indices, $A = (a_{ij} )_{ij \in I \times J} $ and $B=(b_{ij})_{ij \in I \times J}$  matrices with entries from $B(H)$  such that the row norm $\|A\|_r$ and the column norm $\|B\|_c$ are both finite, then 
\begin{itemize} 
\item[(i)] There exist matrices $V = (v_{ij})$ and $W = (w_{ij}) $ such that each row in $ V$ and each column in $W$ is the matrix of a partial isometry and 
$$ A \square B =  \big(\mathrm{diag}(AA^*)\big)^{\frac{1}{2}}\big(V \square W\big ) \big(\mathrm{diag}(B^*B)\big)^{\frac{1}{2}}.  $$
\item[(ii)]
There exists a contraction  matrix $S = (s_{ij}) $  in $\mathrm{Mat}_{I \times J} \big(B(H)\big) $ such that $A\square B $ may be written as a  product of bounded matrices: 
$$ A \square B =  \big(\mathrm{diag}(AA^*)\big)^{\frac{1}{2}} S \big(\mathrm{diag}(B^*B)\big)^{\frac{1}{2}}.  $$
\end{itemize}
\end{theorem}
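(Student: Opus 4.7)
The plan is to build directly on the polar decomposition construction developed in the paragraphs immediately preceding the theorem statement. That construction already produces matrices $V=(v_{ij})$ and $W=(w_{ij})$ in which every row of $V$ and every column of $W$ is a partial isometry, together with the factorizations
\[
A = \big(\mathrm{diag}(AA^*)\big)^{\frac{1}{2}} V, \qquad B = W \big(\mathrm{diag}(B^*B)\big)^{\frac{1}{2}}.
\]
So the real content of the theorem is to verify that these $V$ and $W$ actually realize the claimed decomposition of $A\square B$, and to check that $V\square W$ is a contraction.

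For item (i), I would just compute the $(i,j)$-entry of the proposed right-hand side $\big(\mathrm{diag}(AA^*)\big)^{\frac{1}{2}}(V\square W)\big(\mathrm{diag}(B^*B)\big)^{\frac{1}{2}}$. Because both outer factors are diagonal, left multiplication by the first only scales the $i$-th row by its $(i,i)$ entry and right multiplication by the second only scales the $j$-th column by its $(j,j)$ entry; hence that entry equals $\big(\mathrm{diag}(AA^*)\big)^{\frac{1}{2}}_{ii}\, v_{ij}\, w_{ij}\, \big(\mathrm{diag}(B^*B)\big)^{\frac{1}{2}}_{jj}$. Regrouping as $\big[\big(\mathrm{diag}(AA^*)\big)^{\frac{1}{2}}_{ii} v_{ij}\big]\big[w_{ij}\big(\mathrm{diag}(B^*B)\big)^{\frac{1}{2}}_{jj}\big]$ identifies it with $a_{ij}b_{ij} = (A\square B)_{ij}$ via the two polar factorizations, which establishes (i).

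For item (ii), I would set $S := V\square W$ and apply Livshits' inequality (Theorem 4.1) to bound its norm. Since each row of $V$ is, by construction, the matrix of a partial isometry from $\ell^{2}(J,H)$ to $H$, one has $\|V\|_r\leq 1$, and symmetrically $\|W\|_c\leq 1$. Then $\|S\|_{op} = \|V\square W\|_{op} \leq \|V\|_r \|W\|_c \leq 1$, so $S$ is the contraction whose existence is asserted in (ii), and the decomposition of $A\square B$ follows from the formula already proven in (i).

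The substantive preparatory work, namely the existence of the two polar decompositions with the correct diagonal moduli, has already been carried out in the discussion preceding the theorem, so there is no real obstacle left. The only point worth handling with care is that multiplication by the two diagonal factors acts on $V\square W$ by operator-scaling only in the $i$-th row and $j$-th column respectively; this is immediate from the diagonal structure and requires nothing beyond keeping indices straight.
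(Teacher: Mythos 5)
Your proposal is correct and follows essentially the same route as the paper: both use the row/column polar decompositions $A = \big(\mathrm{diag}(AA^*)\big)^{\frac{1}{2}}V$ and $B = W\big(\mathrm{diag}(B^*B)\big)^{\frac{1}{2}}$ constructed before the theorem, and both obtain (ii) by taking $S := V \square W$ and invoking Livshits' inequality with $\|V\|_r \leq 1$ and $\|W\|_c \leq 1$. The only difference is cosmetic: you spell out the entrywise verification of the algebraic identity in (i) (a regrouping justified by associativity alone), which the paper leaves implicit in the discussion preceding the theorem.
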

\begin{proof} 
The lines preceding the theorem explain the algebraic identity 
claimed in the statement (i). The second statement is a consequence of Livshits' inequality, \cite{Li} Theorem 1.8 or \cite{Cs} (1.7), because $ V$ has row norm  at most $1$  and $W$ has column norm at most $1, $ so the operator norm of $S:= V \square W $ is at most $1.$
\end{proof}

It turns out that the decomposition of the block Schur product of Theorem  \ref{bSDec} may be extended to cover the block Schur tensor product too.

\begin{theorem} \label{bStDec}
Let  $(E_j)_{j \in J}, \, (K_j)_{j \in J}, \, (F_i)_{i \in I}, \, (L_i)_{i \in I}$
be  families of Hilbert spaces, \newline $A= (A_{ij}) \in Mat(E_j,F_i)$ be a row bounded matrix and $B= (B_{ij}) \in Mat(K_j, L_i)$ be a column bounded matrix  then 
\begin{itemize} 
\item[(i)] There exist matrices $V = (v_{ij})$ with  $v_{ij} $ is in $B(E_j, F_i)$  and $W = (w_{ij}) $ with  $w_{ij} $ in $B(K_j, L_i)$ such that each row in $ V$ and each column in $W$ is the matrix of a partial isometry and 
$$ A \boxtimes  B =  \bigg(\big(\mathrm{diag}(AA^*)\big)^{\frac{1}{2}}\boxtimes 1_L\bigg)\bigg(V \boxtimes  W\bigg )\bigg(1_E \boxtimes \big(\mathrm{diag}(B^*B)\big)^{\frac{1}{2}}\bigg).  $$
\item[(ii)]
There exists a contraction  matrix $S = (s_{ij}) $ with $s_{ij}$ in $B\big((E_j \otimes K_j),(F_i \otimes L_i) \big) $ such that    $A\boxtimes B $ may be written as a  product of bounded matrices: 
$$ A \boxtimes  B =  \bigg(\big(\mathrm{diag}(AA^*)\big)^{\frac{1}{2}}\boxtimes 1_L\bigg)S\bigg( 1_E \boxtimes \big(\mathrm{diag}(B^*B)\big)^{\frac{1}{2}}\bigg).  $$
\end{itemize}
\end{theorem}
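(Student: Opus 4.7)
The plan is to imitate, in the tensor product setting, the row-by-row / column-by-column polar decomposition argument that already proved Theorem \ref{bSDec}. The key point is that the polar decomposition does not care whether the entries act on a single Hilbert space or on different spaces $E_j, F_i, K_j, L_i$, so the construction of $V$ and $W$ goes through verbatim.

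More precisely, for each $i \in I$, the $i$-th row $i(A) = (A_{ij})_{j \in J}$ is, by row boundedness, a bounded operator from $\oplus_{j \in J} E_j$ into $F_i$, with $i(A)\,i(A)^* = (AA^*)_{ii} \in B(F_i)$. Performing the polar decomposition to the right yields $i(A) = \big((AA^*)_{ii}\big)^{\frac{1}{2}} i(V)$ with $i(V) = (v_{ij})_{j \in J}$ a partial isometry $\oplus_j E_j \to F_i$ whose components satisfy $v_{ij} \in B(E_j, F_i)$. Collecting these rows gives a matrix $V = (v_{ij})$ with $\|V\|_r \le 1$ and $A_{ij} = \big((AA^*)_{ii}\big)^{\frac{1}{2}} v_{ij}$. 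An analogous polar decomposition to the left of each column $B(j) : K_j \to \oplus_i L_i$ produces the matrix $W = (w_{ij})$ with $w_{ij} \in B(K_j, L_i)$, $\|W\|_c \le 1$, and $B_{ij} = w_{ij}\big((B^*B)_{jj}\big)^{\frac{1}{2}}$.

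Next I would verify (i) by checking the asserted factorization entry-wise. The outer factors are block-diagonal: $\big(\mathrm{diag}(AA^*)\big)^{\frac{1}{2}}\boxtimes I_L$ is the $I \times I$ diagonal matrix with $(i,i)$-entry $\big((AA^*)_{ii}\big)^{\frac{1}{2}} \otimes I_{L_i}$, and $I_E \boxtimes \big(\mathrm{diag}(B^*B)\big)^{\frac{1}{2}}$ is the $J \times J$ diagonal matrix with $(j,j)$-entry $I_{E_j} \otimes \big((B^*B)_{jj}\big)^{\frac{1}{2}}$. Multiplication with the middle matrix $V \boxtimes W$, whose $(i,j)$-entry is $v_{ij} \otimes w_{ij}$, gives $(i,j)$-entry
\[
\Big(\big((AA^*)_{ii}\big)^{\frac{1}{2}} v_{ij}\Big) \otimes \Big(w_{ij}\big((B^*B)_{jj}\big)^{\frac{1}{2}}\Big) = A_{ij} \otimes B_{ij} = (A \boxtimes B)_{ij},
\]
using the polar decompositions from the previous paragraph and the bilinearity $(xI)(v)\otimes(w)(yI)=(xv)\otimes(wy)$ of $\otimes$. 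This yields the factorization claimed in (i).

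For (ii) it suffices to define $S := V \boxtimes W$ and observe that, by Theorem \ref{ThBST}(i) applied with $V$ row bounded of norm $\le 1$ and $W$ column bounded of norm $\le 1$, one has $\|S\|_{op} = \|V \boxtimes W\|_{op} \le \|V\|_r \|W\|_c \le 1$; substituting $V \boxtimes W$ by $S$ in the formula of (i) produces the decomposition of (ii). The only mild obstacle is bookkeeping — making sure that the polar decomposition is interpreted with the correct domain and codomain Hilbert spaces and that, when $I$ or $J$ is infinite, the outer diagonal factors really are bounded (which they are, with norms $\|A\|_r$ and $\|B\|_c$) and the middle matrix $V \boxtimes W$ represents a bounded operator (which is guaranteed precisely by Theorem \ref{ThBST}).
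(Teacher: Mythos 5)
Your proposal is correct, and its skeleton matches the paper's: row-wise and column-wise polar decompositions produce $V$ and $W$, and the Livshits-type bound of Theorem \ref{ThBST} gives that $S := V\boxtimes W$ is a contraction. Where you genuinely differ is in how the factorization identity of item (i) is verified. The paper stays at the operator level: it uses the representation $A\boxtimes B = \big(q\l(A)\big)\big(\r(B)p\big)$ from Theorem \ref{ThBST}, factors $q\l(A) = \Big(\big(\mathrm{diag}(AA^*)\big)^{\frac{1}{2}}\otimes I_L\Big)\big(q\l(V)\big)$ and $\r(B)p = \big(\r(W)p\big)\Big(I_E\otimes\big(\mathrm{diag}(B^*B)\big)^{\frac{1}{2}}\Big)$ by observing that the projections $q$ and $p$ commute with the block-diagonal outer factors, and finally inserts $p$ and $q$ once more to turn the $\otimes$-factors into $\boxtimes$-factors. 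You instead check the identity entry by entry, exploiting that the two outer factors are block diagonal — so the matrix product is literally entrywise, with no summation or convergence issue even for infinite $I,J$ — together with multiplicativity of the tensor product, $\big(x\otimes y\big)\big(v\otimes w\big) = xv\otimes yw$. Your route is more elementary and transparent; the paper's route keeps the identification of $\oplus_{j}E_j\otimes K_j$ inside $E\otimes K$ (and likewise for $Q$ inside $F\otimes L$) automatic via $p$ and $q$, which is exactly the bookkeeping you flag as the ``mild obstacle'' at the end of your argument. Both arguments lean on Theorem \ref{ThBST} at the same two places: to know that $V\boxtimes W$ is a bounded contraction, and to know that $A\boxtimes B$ itself is a bounded operator, so that equality of matrix entries yields equality of operators.
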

\begin{proof} 
With respect to the proof of item (i), we start by recalling the proof of Theorem  \ref{bSDec}. Then $$ A \, = \, \big(\mathrm{diag}(AA^*)\big)^{\frac{1}{2}}V \quad  \text{ and }\quad B \, = \, W \big(\mathrm{diag}  (BB^*)\big)^{\frac{1}{2}},$$
such that $V = (v_{ij}) $ is a matrix where all rows are partial isometries and  $W = (v_{ij}) $ is a matrix where all columns are partial isometries. 
 The operator $q\l(A)$ may then be factorized accordingly and we get 
 \begin{align} \label{left} 
\big( q\l(A)\big) \, &= \, q (A \otimes 1_L) 
\\ 
\notag & \quad\,  \,  \text{ since } q \text{ commutes with  the bd. op. } \big(\mathrm{diag}(AA^*)\big)^{\frac{1}{2}}\otimes 1_L \\ 
\notag & =\,  \bigg(\big(\mathrm{diag}(AA^*)\big)^{\frac{1}{2}}\otimes 1_L\bigg)q\bigg( V \otimes 1_L\bigg)\\
\notag  & =\,  \bigg(\big(\mathrm{diag}(AA^*)\big)^{\frac{1}{2}}\otimes 1_L\bigg)\bigg(q\l(V)\bigg).
\end{align} 

Similarly we get 

\begin{align} \label{right}
\big( \r(B)p \big) \, &= \,  (1_E \otimes B)p  \\
\notag & \quad \,  \,  \text{ since } p \text{ commutes with  the bd. op. } 1_E \otimes \big(\mathrm{diag}(B^*B)\big)^{\frac{1}{2}}\ \\ 
\notag & =\, \bigg( 1_E \otimes W \bigg))  p\bigg(1_E \otimes \big(\mathrm{diag}(B^*B)\big)^{\frac{1}{2}}\bigg)\\
\notag & =\, \bigg(\r(W)p\bigg) \bigg(1_E \otimes \big(\mathrm{diag}(B^*B)\big)^{\frac{1}{2}}\bigg).
\end{align}

By combination of Theorem \ref{bSDec}, the equalities (\ref{left}) and (\ref{right}) 
\begin{align}
A \boxtimes B \,& = \, \big(q\l(A)\big)\big(\r(B)p\big) \\
\notag & = \, q\bigg(\big(\mathrm{diag}(AA^*)\big)^{\frac{1}{2}}\otimes 1_L\bigg)q\bigg(\big(q\l(V)\big)\big(\r(W) p\big)\bigg)p\bigg(1_E \otimes \big(\mathrm{diag}(B^*B)\big)^{\frac{1}{2}}\bigg)p\\
\notag & = \, \bigg(\big(\mathrm{diag}(AA^*)\big)^{\frac{1}{2}}\boxtimes 1_L\bigg)\bigg(V \boxtimes W \bigg)\bigg(1_E \boxtimes \big(\mathrm{diag}(B^*B)\big)^{\frac{1}{2}}\bigg),
\end{align}

and since $S:= V\boxtimes W$ is a contraction by Theorem \ref{bStDec}, the theorem follows. 
\end{proof}

\section{ON THE SET  $R \circ C$ WITH $\|R\|_r \leq 1 $ AND  $\|C\|_c \leq 1.$} 
We find that it is very interesting to know more about which operators in the unit ball of $M_n\big(B(H)\big)$ that may be expressed as a block Schur product $R \square C$ with $R, C$ in $M_n\big((B(H)\big)\,$ such that $\|R\|_r \leq 1$ and $\|C\|_c \leq 1.$ For $n = \infty$ and $H= \bc$ we can show that the ultraweakly closed convex hull of these operators will not contain a positive multiple of the unit ball of $B\big(\ell^2(\bn)\big).$ In the first place we used the program {\em Maple} to indicate that the set of operators $R\circ C$ is a {\em thin } set, and then we saw that  the theory of random matrices may be used to give a proof of this impression. Since we have no explicit examples, but only probabilistic arguments to show the existence of certain operators, we have included our Maple code, which can provide examples up to the dimension $n = 50,$ on an ordinary lap-top.    

The argument, which shows that the  set $\{R \circ C \, : \, \|R\|_r \leq 1 , \, \|C\|_c \leq 1 \,\} $ is thin, is a kind of   backwards Hahn-Banach statement, which is formulated in the corollary following the theorem.  We recall that for a scalar $n \times n$ matrix $T,$ the norm $\|T\|_1$ is the first Schatten norm, which equals the norm of the functional $ M \to \mathrm{tr}(MT).$ The theory of random matrices  will for $n$ sufficiently big provide a $T$ such that $\|T\|_1= 1$ and the values  $|\mathrm{tr}\big((R\circ C)T\big)| $ are all small, and in this way show that the set is {\em thin.} 

We begin with a lemma, which gives an upper estimate over $R, C$  of $|\mathrm{tr}\big((R\circ C)T\big)|.$ 

\begin{lemma} \label{maxlemma}
Let $n$ be a natural number and $T =(t_{ij})$ a matrix in $M_n(\bc)$ then
\begin{itemize}
\item[(i)] \begin{align*}&\sup \{ |\mathrm{tr}\big((R\circ C)T\big)|\, : \, R, C \in M_n(\bc), \, \|R\|_r \leq 1, \|C\|_c \leq 1 \,\}\\& = \sup \big\{\sum_{i=1}^n \big(\sum_{j=1}^n |t_{ij}|^2 b_{ij}\big)^{(1/2)}\,:\, b_{ij} \geq 0,\, \, \, \sum_{i=1}^n b_{ij} \leq 1\,\big\}
\end{align*}
\item[(ii)] \begin{align*} &\sup \{ |\mathrm{tr}\big((R\circ C)T\big)|\, : \, R, C \in M_n(\bc), \, \|R\|_r \leq 1, \|C\|_c \leq 1 \,\}\\& \leq n \max \{|t_{ij}|\, : \, 1 \leq i,j \leq n\,\}.
\end{align*}
\end{itemize}
\end{lemma}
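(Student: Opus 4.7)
The plan is to reduce the two-variable supremum over $(R,C)$ in (i) to a single-variable supremum over an entrywise nonnegative matrix $b=(b_{ij})$ by one application of the Cauchy--Schwarz inequality, and then to deduce (ii) by a second Cauchy--Schwarz estimate on the resulting expression. The entire argument is built from two routine applications of Cauchy--Schwarz; the only thing that requires care is the bookkeeping of indices.

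\textbf{Step 1 (upper bound in (i)).} First I would expand the trace, writing
\[
\mathrm{tr}\bigl((R\circ C)T\bigr)\;=\;\sum_{i,k}r_{ik}c_{ik}t_{ki}.
\]
For each fixed $k$ I would apply Cauchy--Schwarz in $i$ with the factors $c_{ik}$ and $r_{ik}t_{ki}$ and invoke the column bound $\|C\|_c\leq 1$ to dispose of the first factor:
\[
\Bigl|\sum_i r_{ik}c_{ik}t_{ki}\Bigr|\;\leq\;\Bigl(\sum_i|c_{ik}|^2\Bigr)^{1/2}\Bigl(\sum_i|r_{ik}|^2|t_{ki}|^2\Bigr)^{1/2}\;\leq\;\Bigl(\sum_i|r_{ik}|^2|t_{ki}|^2\Bigr)^{1/2}.
\]
Summing in $k$, setting $b_{ki}:=|r_{ik}|^2$ (so that $\|R\|_r\leq 1$ becomes $\sum_k b_{ki}\leq 1$ for every $i$), and relabelling the dummies $(k,i)\mapsto(i,j)$ yields
\[
\bigl|\mathrm{tr}\bigl((R\circ C)T\bigr)\bigr|\;\leq\;\sum_{i}\Bigl(\sum_{j}|t_{ij}|^2 b_{ij}\Bigr)^{1/2},\qquad b_{ij}\geq 0,\ \sum_i b_{ij}\leq 1.
\]
Taking the sup over admissible $(R,C)$ gives the $\leq$ direction of (i).

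\textbf{Step 2 (matching lower bound in (i)).} Given an admissible $b$, I would take $r_{ik}:=\sqrt{b_{ki}}$, so $\|R\|_r\leq 1$ holds by construction. For each $k$ with $\lambda_k:=\bigl(\sum_i b_{ki}|t_{ki}|^2\bigr)^{1/2}>0$ I would set $c_{ik}:=r_{ik}\overline{t_{ki}}/\lambda_k$ (and $c_{ik}:=0$ otherwise); then $\sum_i|c_{ik}|^2\leq 1$, so $\|C\|_c\leq 1$, and this choice of column precisely saturates the Cauchy--Schwarz inequality used in Step 1. A direct computation gives $\sum_i r_{ik}c_{ik}t_{ki}=\lambda_k$ and hence $\mathrm{tr}((R\circ C)T)=\sum_k\lambda_k$, which after the same relabelling equals $\sum_i(\sum_j|t_{ij}|^2 b_{ij})^{1/2}$. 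This provides the reverse inequality and completes (i).

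\textbf{Step 3 (deducing (ii)).} Starting from the formula in (i), I would bound $|t_{ij}|^2\leq M^2$ with $M:=\max_{i,j}|t_{ij}|$ and then apply Cauchy--Schwarz in $i$:
\[
\sum_i\Bigl(\sum_j|t_{ij}|^2 b_{ij}\Bigr)^{1/2}\;\leq\;M\sum_i\Bigl(\sum_j b_{ij}\Bigr)^{1/2}\;\leq\;M\sqrt{n}\,\Bigl(\sum_{i,j}b_{ij}\Bigr)^{1/2}\;\leq\;Mn,
\]
since $\sum_{i,j}b_{ij}=\sum_j\sum_i b_{ij}\leq n$. The hard part, such as it is, is keeping the row/column constraints on $R$ and $C$ aligned with the column-sum constraint on $b$ under the index swaps in Step 1, and making the phase-and-norm choice of $c_{ik}$ in Step 2 simultaneously saturate Cauchy--Schwarz and respect $\|C\|_c\leq 1$; everything else is mechanical.
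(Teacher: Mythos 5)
Your proof is correct, and while your part (i) follows essentially the paper's route, your part (ii) does not. In (i) the paper works in the mirror image of your argument: it fixes $C$ and evaluates $\sup_{\|R\|_r\leq 1}|\mathrm{tr}\big((R\circ C)T\big)|$ exactly in one step, since the supremum of $\sum_j r_{ij}(c_{ij}t_{ji})$ over each unit row of $R$ equals the $\ell^2$-norm $\big(\sum_j|c_{ij}|^2|t_{ji}|^2\big)^{1/2}$, and then substitutes $b_{ij}:=|c_{ij}|^2$; you instead eliminate $C$ by Cauchy--Schwarz, keep $R$, and supply an explicit saturating choice of $C$ for the reverse inequality. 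The content is the same (your Step 2 is precisely the equality case that the paper's duality step invokes implicitly), but your bookkeeping has a small advantage: it lands exactly on $\sum_i\big(\sum_j|t_{ij}|^2b_{ij}\big)^{1/2}$ with the column constraint as stated, whereas the paper's own computation produces $|t_{ji}|^2$ in place of $|t_{ij}|^2$ and tacitly uses the invariance of the supremum under $T\mapsto T^t$ (which is true, by transposing both $R$ and $C$ and using commutativity of $\circ$, but is never remarked). In (ii) your argument is genuinely different and more elementary: the paper majorizes every $|t_{ij}|$ by the maximum $M$, reduces to the constant matrix, and then solves the resulting optimization over $b$ exactly, arguing that the objective is concave and exhibiting the stationary point $b_{ij}=1/n$ with value $n$; you obtain the same bound $nM$ from a single Cauchy--Schwarz in $i$ together with $\sum_{i,j}b_{ij}\leq n$. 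What the paper's longer route buys is the extra information that $n$ is the exact maximum of $\sum_i\big(\sum_j b_{ij}\big)^{1/2}$ over the constraint set; for the lemma as stated only the upper bound is needed, so your shorter argument suffices.
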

\begin{proof}
In item (i) we may for a fixed $C$ with $\|C\|_c \leq 1$ and varying $R$ with $\|R\|_r \leq 1$ use an elementary property of the Hilbert space inner product to get
\begin{align*}
 \underset{\|R\|_r \leq 1}{\sup}|\mathrm{tr}\big((R\circ C)T \big)|\, &= \,\underset{\|R\|_r \leq 1}{\sup}| \sum_{i=1}^n\big(\sum_{j=1}^nr_{ij}(c_{ij}t_{ji}) \big)| \\
 &\, = \sum_{i=1}^n\sqrt{\sum_{j=1}^n |c_{ij} |^2|t_{ji}|^2} 
 \end{align*}
 We will let $b_{ij} := |c_{ij}|^2$, then the matrix $B:=(b_{ij})$ is given by $b_{ij} \geq 0 $ and $\sum_{i=1}^n b_{ij} \leq 1,$ and item (i) follows.  
 
 For item (ii) we remark that the statement in item (i) implies that we get a bigger maximal value if we replace the matrix $T$ by the  matrix $M$ which is defined by $m_{ij}\,:= \, \max \{|t_{kl}|\, : \, 1 \leq k,l \leq n\,\}$ for all pairs $(i,j).$ We will then look at the maximization over $B= (b_{ij})$ in the case where all $m_{ij} =1.$ First we remark, that if for a given $j$ we have $\sum_{i} b_{ij} < 1,$ then the desired value will increase with growing $b_{jj}, $ so we may assume that $b_{jj} = 1 -\sum_{i \neq j}b_{ij}.$ 
 We are then left with the  exercise to maximize  the function $$ \sum_{i=1}^n \sqrt{ \sum_{j \neq i} b_{ij} + 1 - \sum_{k \neq i} b_{ki}}$$ over the set of $b_{ij}, $ for $i \neq j $ given by $b_{ij} \geq 0 $ and for any $j$ we have $\sum_{i \neq j} b_{ij} \leq 1.$ The function we optimize is concave since $\sqrt{x}$  is concave and the object function is the square root function composed with an affine function of the variables $\{b_{ij}\,:\, i \neq j\,\}, $ so the object function of the variables $\{b_{ij}\,:\, i \neq j\,\}$ is concave and a stationary point in the domain will be a point where the maximum is attained.  Partial differentiation with respect to $b_{ij} $ gives the partial derivative 
 $$\frac{1}{2}\big(\sum_{k \neq i } b_{ik} +1 - \sum_{l \neq i} b_{li}\big)^{-(1/2)} -  \frac{1}{2}\big(\sum_{k \neq j } b_{jk} +1 - \sum_{l \neq j} b_{lj}\big)^{-(1/2)}$$ We find that the point $b_{ij} = \frac{1}{n} $ for $ i \neq j$ is a stationary point and the - maximal value attained here is $n, $ so item (ii) follows. 
\end{proof}
The following theorem is based on several numerical experiments perform-ed via Maple's powerful solver. Afterwards we realized that the theory of random matrices may be used to explain the outcome of the experiments. Since the program Maple operates with random variables which are evenly distributed on the 199 integers from $-99$ to $99,$ our theorem below is also based on such random variables.

\begin{theorem} \label{random}
There exists a natural number $n_0$ such that for each natural number $n \geq n_0$ there exists a real symmetric $n \times n$  matrix $T = (T_{ij}) $ such that $\|T\|_1 = 1$ and $$\forall R, C  \in M_n(\bc):\quad |\mathrm{tr}\big((R\circ C)T\big)| \leq 3\|R\|_r\|C\|_c n^{-\frac{1}{2}}.$$  
\end{theorem}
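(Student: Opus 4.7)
The plan is to combine Lemma \ref{maxlemma}(ii) with the Wigner semicircle law via the probabilistic method. By item (ii) of that lemma,
$$\sup\{|\mathrm{tr}((R\circ C)T)| : \|R\|_r\le 1,\, \|C\|_c\le 1\} \le n\,\max_{i,j}|t_{ij}|,$$
so it suffices to exhibit, for each $n\ge n_0$, a real symmetric $T\in M_n(\bc)$ with $\|T\|_1=1$ and $\max_{i,j}|t_{ij}|\le 3\,n^{-3/2}$. This is the natural scale: a typical Wigner matrix has bounded entries and trace norm of order $n^{3/2}$, so its normalization to trace norm $1$ has entries of order $n^{-3/2}$.

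Let $W=(W_{ij})$ be the random real symmetric $n\times n$ matrix whose upper-triangular entries $\{W_{ij}\}_{i\le j}$ are independent and uniformly distributed on the $199$ integers $\{-99,-98,\dots,98,99\}$. Each entry is centred, bounded by $99$, and has variance
$$\sigma^2 \;=\; \frac{1}{199}\sum_{k=-99}^{99} k^2 \;=\; 3300.$$
By the Wigner semicircle theorem, the empirical spectral measure of $W/\sqrt n$ converges weakly in probability to the semicircle distribution with density $\frac{1}{2\pi\sigma^2}\sqrt{4\sigma^2-x^2}$ on $[-2\sigma,2\sigma]$. Because the bounded support of the entries forces $\|W\|/\sqrt n$ to concentrate near $2\sigma$, this convergence extends to the continuous function $|x|$, yielding
$$\frac{\|W\|_1}{n^{3/2}} \;=\; \frac{1}{n}\sum_{i=1}^n\left|\frac{\lambda_i(W)}{\sqrt n}\right| \;\longrightarrow\; \int_{-2\sigma}^{2\sigma}|x|\,\frac{\sqrt{4\sigma^2-x^2}}{2\pi\sigma^2}\,dx \;=\; \frac{8\sigma}{3\pi} \;=\; \frac{8\sqrt{3300}}{3\pi} \;>\; 48$$
in probability as $n\to\infty$.

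Consequently, for all sufficiently large $n$ the event $\{\|W\|_1 \ge 33\,n^{3/2}\}$ has positive probability. Picking a realization of $W$ in this event for each $n\ge n_0$ and setting $T := W/\|W\|_1$, we obtain a real symmetric $T$ with $\|T\|_1 = 1$ and, using $|W_{ij}|\le 99$,
$$\max_{i,j}|t_{ij}| \;\le\; \frac{99}{\|W\|_1} \;\le\; \frac{99}{33\,n^{3/2}} \;=\; \frac{3}{n^{3/2}},$$
so Lemma \ref{maxlemma}(ii) gives $|\mathrm{tr}((R\circ C)T)| \le 3\|R\|_r\|C\|_c\,n^{-1/2}$, as required.

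The main obstacle is the passage from the bulk statement of the semicircle law to the $L^1$ spectral statistic $\mathrm{tr}|W|$: weak convergence of the empirical measure does not automatically imply convergence of its integral against the continuous but unbounded function $|x|$. Since the entries of $W$ are uniformly bounded, however, $\|W\|$ is of order $\sqrt n$ with overwhelming probability, which supplies the uniform integrability needed to justify the displayed limit. Once this standard point is in place, the rest of the argument is just the rescaling above and a choice of threshold $n_0$.
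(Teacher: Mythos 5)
Your proposal is correct and is essentially the paper's own argument: reduce via Lemma \ref{maxlemma}(ii) to producing a real symmetric matrix with unit trace norm whose entries are at most $3n^{-3/2}$ in modulus, then obtain one with positive probability from a Wigner matrix $W$ with uniform integer entries, normalized by its trace norm, since the semicircle law forces $\|W\|_1 \geq 33\, n^{3/2}$ eventually while the entries stay bounded by $99$. The one point where you diverge --- justifying convergence of $\langle L_n, |x|\rangle$ against the unbounded function $|x|$ by invoking concentration of $\|W\|$ at scale $\sqrt{n}$ to get uniform integrability --- is handled more economically in the paper, which instead integrates the truncated function $f(t)=\min(|t|,2)$ and uses only the one-sided inequality $\langle L_n,|x|\rangle \geq \langle L_n,f\rangle$, so that Wigner's theorem for bounded continuous test functions suffices and no F\"{u}redi--Koml\'{o}s-type bound on the operator norm of $W$ is needed.
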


\begin{proof}  
We will look at random real valued matrices, and since Maple uses a random generator which produces integers in the interval $[-99, 99],$ we will consider random matrices, named $N$, where the entries are independent, integer valued and evenly distributed random variables on these integers.  In order to get a symmetric matrix we define the symmetric matrix as the symmetric part $SN := \frac{1}{2}\big(N + Transpose(N)\big). $ The mean value of all the variables are $0,$ and  the second moment $E(N_{ij}^2)= 3300.$ We define $s := \sqrt{\frac{3300}{2}} \sim 40.62,$
and then  we can define a Wigner matrix $X := (X_{ij})$ by
\begin{equation}
X_{ij} :=  \frac{1}{s\sqrt{n}}SN_{ij} .
\end{equation} 
 We base our use of the theory of random matrices on the book \cite{AGZ}, and  we found that the book is freely available at the last authors homepage. We are in debt to our colleague Steen Thorbj{\o}rnsen, who is an expert on free probability and random matrices. He directed us to this very useful book, and helped us in verifying that our arguments, based on this theory are correct. 
We remark first that for any natural number $k$ 
 the moments $E(|X_{ij}|^k)$ are all finite, so the condition (2.1.1) of the book is fulfilled. For $i < j$ we find that the variables $X_{ij}$ are independent and identically distributed with  $E(X_{ij}) =0,$  $E(X_{ij}^2) = \frac{1}{n}$ and the variables $X_{ii} $ are independent identically distributed with $E(X_{ii}) =0,$ $E (X_{ii}^2 ) = \frac{2}{n},$ so we have, what the book names a {\em Wigner matrix } $(X_{ij}).$ Fortunately we do not have to get far into the book \cite{AGZ} to find the theorem we will use.  Theorem 2.1.1 of \cite{AGZ},  which goes back to Wigner, says that the empirical distribution of the eigenvalues of the symmetric matrix $ X$ converges, weakly in probability, to the standard semicircle distribution. The latter is given by the density $\sigma(x):= \frac{1}{2\pi}\sqrt{4 - x^2} $ on the interval $[-2,2].$ 
 The empirical distribution of the eigenvalues of $ X$ is based on the point measures $\delta_{\lambda_i}$ with mass 1 placed in the $n$ eigenvalues  $\lambda_1 \leq \lambda_2 \leq \dots \leq \lambda_n$ 
 of $X$ such that $$L_X:= \frac{1}{n}\sum_{i=1}^n \d_{\l_i},$$ and the meaning of weak convergence in probability is explained in the  lines following Theorem 2.1.1 of that book, and we quote, using the notation from the book,
 $$ \forall \e > 0\, \forall f \in C_b(\br): \,\, \underset{ n \to \infty}{\lim}\,P(|\langle L_X,f \rangle - \langle \sigma, f \rangle| > \e) = 0. $$

  We will continue with this notation and define a real valued continuous and bounded function $f(t)$ on $\br$ by 
 \begin{equation}
 f(t) := \begin{cases} 2 \, \, \text{ if } |t| > 2 \\ |t| \text{ if } |t| \leq 2 \end{cases}
\end{equation} 

Then there exists a natural number $n_0$ such that 
\begin{equation} \forall n \geq n_0: \quad
P(|\langle L_X, f \rangle - \langle \sigma, f \rangle| > 0.02) < 0.5. 
\end{equation} 

This means that for any $n \geq n_0$ there exists a real symmetric   $n \times n $ matrix $ S = (S_{ij})$ such that $|S_{ij}| \leq 99$ and the matrix  $V$ defined by  $V:= \frac{1}{s\sqrt{n}}S $ has an emperical distribution $L_V,$
which satisfies \begin{equation}
\langle L_V, f \rangle \, > \, \langle \sigma , f \rangle - 0.02 = \frac{2}{2\pi}\int_0^2t\sqrt{4 -t^2}dt - 0.02 = \frac{8}{3\pi} -0.02 > 0.82. 
\end{equation}  
  
By definition of $L_V$ and $f$ we have 
\begin{align}
\frac{1}{n}\|V\|_1\,& = \, \langle L_V, |t| \rangle \,\geq \, \langle L_V, f \rangle \, > \,0.82 \text{ so }\\ \label{norm1}
\|S\|_1\, &=\, s \sqrt{n} \|V\|_1 > s n^{(3/2)} \cdot 0.82 > 33.30\cdot n^{(3/2)}. 
\end{align}
We may then define $T:= (1/\|S\|_1)S,$ so $\|T\|_1 =1$ and by item (ii) of Lemma \ref{maxlemma} and inequality \ref{norm1} we get
\begin{align}
\notag&\forall R, C \in M_n(\bc), \|R\|_r \leq 1 , \, \|C\|_c \leq 1:\\ &\,  |\mathrm{tr}\big( (R \circ C) T\big)| \, \leq \, \frac{99n}{\|S\|_1} \leq \frac{99n}{33.3n^{(3/2)}} < 3n^{-(1/2)},
\end{align}
and the theorem follows. 
\end{proof}
 
 \begin{corollary}
 There exists a natural number $n_0$ such that for all natural numbers $n \geq n_0$ there exists a self-adjoint projection $E$ in $M_n(\bc)$ such that $3n^{-(1/2)}( 2E -I)$ is not an element of the closed convex hull $\cc$ of the set $\{R \circ C\, :\, \|R\|_r \leq 1\, \|C\|_c \leq 1\,\}.$  
 \end{corollary}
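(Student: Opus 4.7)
The plan is to separate $X := 3n^{-1/2}(2E-I)$ from $\cc$ by means of the continuous linear functional $Y \mapsto \tr(YT)$, where $T$ is the matrix supplied by Theorem~\ref{random}. So, for $n \geq n_0$, I fix the self-adjoint $T \in M_n(\bc)$ from that theorem, with $\|T\|_1 = 1$ and $|\tr((R\circ C)T)| \leq 3 n^{-1/2}\|R\|_r\|C\|_c$ for all $R,C$. A closer look at the proof of Theorem~\ref{random} actually yields the strictly sharper bound $99n/\|S\|_1 \leq (99/33.3)\, n^{-1/2}$, so there is a constant $\alpha < 3$ (for instance $\alpha = 99/33.3$) with $|\tr((R\circ C)T)| \leq \alpha\, n^{-1/2}\|R\|_r\|C\|_c$ on the generating set of $\cc$.

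Since $Y \mapsto \tr(YT)$ is linear and norm continuous on the finite dimensional space $M_n(\bc)$, the preceding bound passes to finite convex combinations and then to their closure, giving
$$|\tr(YT)| \leq \alpha\, n^{-1/2} \qquad \text{for every } Y \in \cc.$$
To produce a $Y$ that violates this inequality, I diagonalize $T = \sum_k \lambda_k v_k v_k^*$ in an orthonormal eigenbasis and let $E$ be the orthogonal projection onto the span of those $v_k$ with $\lambda_k \geq 0$. Then $E$ is a self-adjoint projection, the symmetry $2E-I$ acts as $+1$ on the non-negative and $-1$ on the strictly negative spectral subspaces of $T$, and a direct diagonal computation gives $(2E-I)T = |T|$.

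Consequently,
$$\tr\!\big(3 n^{-1/2}(2E-I)\, T\big) \,=\, 3 n^{-1/2}\,\tr|T| \,=\, 3 n^{-1/2}\|T\|_1 \,=\, 3 n^{-1/2} \,>\, \alpha\, n^{-1/2},$$
so $3 n^{-1/2}(2E-I)$ violates the uniform bound on $\cc$ and therefore cannot lie in $\cc$, which is the claim of the corollary. The one mildly delicate point is arranging a \emph{strict} separation: this is not visible in the literal statement of Theorem~\ref{random} (which carries a non-strict $\leq$) but is transparent from the numerical constants in its proof, and it is the single step I would flag when writing the argument out in full.
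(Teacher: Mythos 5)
Your proof is correct and takes essentially the same route as the paper: separate $3n^{-1/2}(2E-I)$ from $\cc$ via the functional $Y \mapsto \mathrm{tr}(YT)$ with $T$ from Theorem~\ref{random}, choose $E$ as the spectral projection onto the non-negative part of $T$ so that $\mathrm{tr}\big((2E-I)T\big) = \|T\|_1 = 1$, and conclude. You are in fact more careful than the paper on the one delicate point: the paper simply asserts the strict bound $\max\{|\mathrm{tr}(YT)| : Y \in \cc\} < 3n^{-1/2}$, whereas you correctly observe that this requires the sharper constant $99/33.3 < 3$ from the theorem's proof rather than the non-strict inequality in its literal statement.
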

\begin{proof} 

By the theorem there exists for $n \geq n_0$ a self-adjoint operator $ T$ with $\|T\|_1 =1$ and $\max\{|\mathrm{ tr} (YT)| \, : Y \in \cc\,\} < 3n^{-(1/2)}.$ Let $E$ be the range projection for the positive part of $T$ then  $\mathrm{tr}( 3n^{-(1/2)}(2E-I)T ) = 3n^{-(1/2)}$ so $ 3n^{-(1/2)}(2E-I)$
is not a in $\cc.$ 
\end{proof} 
 
\medskip
\noindent
\large{\bf{{The Maple}} code}

As mentioned above we got the inspiration to the content of Theorem \ref{random} from experiments with Maple and random matrices. Below we present the Maple code, and we are very thankful to our colleague S{\o}ren Eilers who was very helpful in getting the program to work. He has in \cite{EJ} advocated  for the use of Maple as a source of inspiration for mathematical insights, and we are happy to tell, that this idea has been successful here.  

The user chooses $n,$ the dimension named {\em dim}, then the program chooses an $n \times n$  matrix  with random integer values in the set $[-99, 99],$  and then  takes its symmetric part, which is denoted  $sN.$ Then the first Schatten  norm of the symmetric matrix  $sN$ is computed and the operator $T$ is  defined as $T := (1/\|sN\|_1)sN. $ The maximization of $|\mathrm{tr}\big((R\circ C) T)| $ is then based on the result from Lemma \ref{maxlemma} item(i), which easily may be transformed into a few lines of Maple code. In the end, the program compares the maximal value times the square root of the dimension $n$  in the variable {\em MaxSqrtDim}. According to Theorem \ref{random} this value should be at most 3 in at least half of the experiments.  In my experiments it is always less than 2.5.

\medskip
\noindent
\begin{verbatim}
restart;
with(LinearAlgebra): with(Optimization):
dim:=5;
N:=RandomMatrix(dim,dim,datatype=float);
N:=1/2*(N+Transpose(N));
A:=SingularValues(N,output=[S]);
N1:=add(A[i],i=1..dim);
M:=(1/N1)*N;
object:=add(sqrt(add(b[i,j]*M[j,i]^2,j=1..dim)),i=1..dim);
boundary:={seq(add(b[i,j],i=1..dim)<=1,j=1..dim)};
midpoint:={seq(seq(b[i,j]=1/dim,i=1..dim),j=1..dim)};
MV:= Maximize(object,boundary,assume=nonnegative,       
     initialpoint=midpoint);                                                        
MaxvSqrtDim := MV[1]*sqrt(dim + 0.00000000001);
\end{verbatim}

\bigskip
We have made tests up to the dimension 50, and the variable {\em MaxSqrtDim } was, as told above,  always below 2.5, but for each dimension there was only a small variation of the size 0.1 in the answers for different random matrices. The value seems to be growing slowly with the dimension, such that for dimensions less than 10 it is  less than 2.1, whereas it is bigger than 2.4 for dimensions bigger than 40.

\section{FINAL REMARKS}

The decompositions of the block Schur product and the block Schur tensor product as presented in Theorem \ref{bSDec} and  
Theorem \ref{bStDec}  are based on the polar decomposition result for a single operator, and our decomposition do have something in common with that construction. On the other hand the middle term $S$ is not unique in an obvious way, whereas the operators $V \square W$ and  $V \boxtimes W$ are given by  explicit constructions, just as in the case of the polar decomposition. 

In the case of the polar decomposition say $A = U |A|$ we think of $|A| := (A^*A)^{\frac{1}{2}}$ as a numerical value, but as far as we know, there are no obvious analogies in bilinear algebra to  the roles played by the factors $\big(\mathrm{diag}(AA^*)\big)^{\frac{1}{2}}$
and  $\big(\mathrm{diag}(B^*B)\big)^{\frac{1}{2}}.$

The meaning of Theorem \ref{random}, is not fully digested, as we see it, and we have no way to give explicit examples, for $n$ large, of operators which may play the role of $T$ in  Theorem \ref{random}.

The numerical examples show some interesting patterns, which we have not been able to get into a theorem. It turns out, that for all the random matrices, Maple has produced for us and made computations with, the set of pairs $(i,j)$ in $\{1, \dots,n\} \times \{1, \dots,n\}$ with $ b_{ij} \neq 0$  never has more than $2n$ elements and for rather many of these pairs we have $b_{ij} =1.$

\end{document}